\def\@typesizes{%
       \or{5}{6.5}\or{6}{7.5}\or{7}{8.5}\or{8}{11}\or{9}{12}%
       \or{10}{13}
       \or{\@xipt}{14}\or{\@xiipt}{15}\or{\@xivpt}{18}%
       \or{\@xviipt}{20}\or{\@xxpt}{24}}
\numberwithin{equation}{section}
\theoremstyle{plain}
\newtheorem{theorem}{ Theorem}[section]
\newtheorem{proposition}[theorem]{ Proposition}
\newtheorem{lemma}[theorem]{ Lemma}
\newtheorem{corollary}[theorem]{ Corollary}
\newtheorem{example}[theorem]{ Example}
\newtheorem{remark}[theorem]{ Remark}
\newtheorem{definition}[theorem]{ Definition}
\newtheorem{conjecture}{ Conjecture}
\def\BET{\begin{theorem}}
\def\ENT{\end{theorem}}
\def\BEP{\begin{proposition}}
\def\ENP{\end{proposition}}
\def\BEL{\begin{lemma}}
\def\ENL{\end{lemma}}
\def\BEC{\begin{corollary}}
\def\ENC{\end{corollary}}
\def\BEE{\begin{example} \rm}
\def\ENE{\end{example}}
\def\BER{\begin{remark} \rm}
\def\ENR{\end{remark}}
\def\BED{\begin{definition} \rm}
\def\END{\end{definition}}
\def\BECJ{\begin{conjecture}}
\def\ENCJ{\end{conjecture}}
\def\bea{\begin{eqnarray}}
\def\eea{\end{eqnarray}}
\def\beq{\begin{equation}}
\def\eeq{\end{equation}}
\def\beal{\begin{align*}}
\def\eeal{ \end{align*} }
\newcommand{\Endproof}{\ifmmode\eqno\square
  \else{\hfill$\square$\parfillskip=0pt\par}\fi}
\newif\ifmeanright
\newcommand{\finite}[1]{\meanrightfalse
\ifx#1H\meanrighttrue\fi
\ifx#1B\meanrighttrue\fi
\ifmeanright{\lefteqn{\overset\circ{\hphantom{#1'}\vphantom{\prime}}}#1}
\else{\lefteqn{\overset\circ{\hphantom{#1}\vphantom{\prime}}}#1}\fi}
\begin{document}

\centerline{\bf Examples of Plentiful Discrete Spectra in}
\centerline{\bf Infinite Spatial Cruciform Quantum  Waveguides}

\bigskip

\centerline{\it Fedor Bakharev\footnote{Mathematics and Mechanics Faculty, St. Petersburg State University,
7/9 Universitetskaya nab., St. Petersburg, 199034 Russia, fbakharev@yandex.ru}, Sergey Matveenko\footnote{Chebyshev Laboratory, St. Petersburg State University, 14th Line V.O., 29B, Saint Petersburg 199178 Russia;
National Research University Higher School of Economics, 194100, Saint-Petersburg, Kantemirovskaya st., 3A, office 417, {matveis239@gmail.com}} 
and Sergey Nazarov\footnote{Mathematics and Mechanics Faculty, St. Petersburg State University,
7/9 Universitetskaya nab., St. Petersburg, 199034 Russia; Saint-Petersburg State Polytechnical
University, Polytechnicheskaya ul., 29, St. Petersburg, 195251, Russia; Institute of
Problems of Mechanical Engineering RAS, V.O., Bolshoj pr., 61, St. Petersburg, 199178,
{srgnazarov@yahoo.co.uk}}}

\bigskip

{\bf Abstract.}
{Spatial cruciform quantum waveguides (the Dirichlet problem for Laplace operator)
are constructed such that the total multiplicity of the discrete spectrum
exceeds any preassigned number.
}

{\bf Keywords}: cruciform waveguide, multiplicity of discrete spectrum, asymptotics, localization
of eigenfunctions, thin quantum lattices.

\bigskip

\section{Introduction}
In the informative paper \cite{Gri} D. Grieser proved in particular that the spectrum of a finite
lattice of thin (with diameter $O(\varepsilon)$, $\varepsilon \ll 1$) quantum waveguides gets much more complicated
asymptotic structure than the Neumann Laplacian in the same thin lattice whose spectrum is described
by the classical L. Pauling model \cite{Pau}, that is, a one-dimensional skeletal lattice graph
with differential structures on edges and the classical Kirchhoff transmission conditions at the vertices.
Indeed, according to \cite{Gri}, the low-frequency range of the Dirichlet Laplacian modeling quantum waveguides,
consists of the finite family
\begin{equation}
\label{0}
\varepsilon^{-2}\lambda_1, \ldots, \varepsilon^{-2} \lambda_J
\end{equation}
where $\lambda_1$, \ldots, $\lambda_J$ are eigenvalues in the discrete spectra of the Dirichlet problem in unbounded
domains which describe the boundary layer phenomenon, have several cylindrical
outlets to infinity and are obtained from
the lattice by stretching the coordinate systems centered at the nodes. For the Neumann case,
set \eqref{0} is obviously empty but in the Dirichlet case to detect those eigenvalues
becomes a challenging question because variational methods do not apply. In our paper we will demonstrate
particular shapes of cruciform spatial quantum waveguides where the number $J$ in list \eqref{0}
can be made arbitrarily large.

For two-dimensional rectangular lattices of thin (with width $\varepsilon \ll 1$) quantum waveguides
in Fig. \ref{fig1},a, comprehensive results have been obtained in \cite{na480, na548, naIAN} where
in particular it was proved that the discrete spectrum of the cruciform waveguide
$\Pi=\{(x_1,x_2)\in \mathbb{R}^2 : |x_1|<1/2 \mbox{ or } |x_2|<1/2\}$ consists of the only eigenvalue
$\Lambda_\Pi\in (0,\lambda_\dagger(\Pi))$ while clearly the continuous spectrum $[\lambda_\dagger(\Pi),+\infty)$
has the cut-off value $\lambda_\dagger(\Pi)=\pi^2$. The corresponding eigenfunction normalized in $L^2(\Pi)$
will be denoted by $U_\Pi$:
\begin{equation}
\label{DPi}
-\Delta_y U_{\Pi}(y)=\Lambda_\Pi U_\Pi(y),\quad y\in\Pi, \quad U_\Pi=0,\quad y\in \partial\Pi.
\end{equation}
Moreover, this homogeneous  Dirichlet problem
with the threshold parameter $\lambda=\lambda_\dagger(\Pi)$ has no bounded solutions, neither eigenfunction
decaying at infinity, nor solution which stabilizes to $c_j^\pm \cos x_j$ as $x_{3-j}\to \pm \infty$, $j=1,2$.
The latter, again due to a result in \cite{Gri}, implies that the mid-frequency range of the spectrum
of the rectangular lattice of thin planar quantum waveguides is described by ordinary differential equations on edges
of the rectangular graph, Fig \ref{fig1},b, but in contrast to the Pauling model \cite{Pau}, all vertices are supplied with
the Dirichlet conditions splitting
the graph into independent line intervals.

Quite the same conclusions but by means of a substantively modified approach
were made in \cite{BaMaNaDan, BaMaNaAa} for a rectangular quantum lattice composed of thin circular cylinders
as well as for the cruciform waveguide $Q=\{(x_1,x_2,x_3): x_1^2+x_3^2<1/4 \mbox{ or } x_2^2+x_3^2<1/4\}\subset\mathbb{R}^3$, Fig. \ref{fig2},a,
which also has only one point in the discrete spectrum of the Dirichlet problem. However, three-dimensional geometry offers much
many options and in the sequel we will describe cross-sections of cylinders in the cruciform junction that provide any
prescribed number $J$ in \eqref{0}.
We mention that, after factoring $\cos (\pi x_3)$ out, the spatial waveguide with the unit square cross-section
in Fig. \ref{fig2},b, inherits the only isolated eigenvalue $\Lambda_\Pi<\pi^2$ from the planar waveguide $\Pi$ but the total
multiplicity of the discrete spectrum in the waveguide with the right-angled rhombic cross-section in Fig. \ref{fig2},c, is not known yet.

\begin{figure}[ht]
\begin{center}
\begin{minipage}[ht]{0.3\linewidth}
\includegraphics[width=0.85\linewidth]{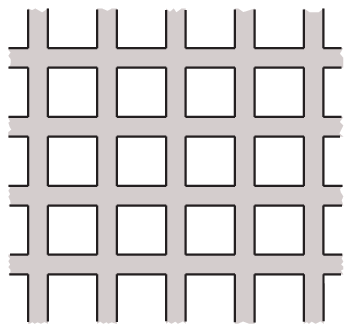}
\centerline{a)}
\end{minipage}
\begin{minipage}[ht]{0.3\linewidth}
\includegraphics[width=0.85\linewidth]{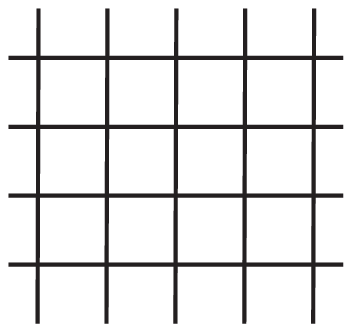}
\centerline{b)}
\end{minipage}
\begin{minipage}[ht]{0.3\linewidth}
\includegraphics[width=0.85\linewidth]{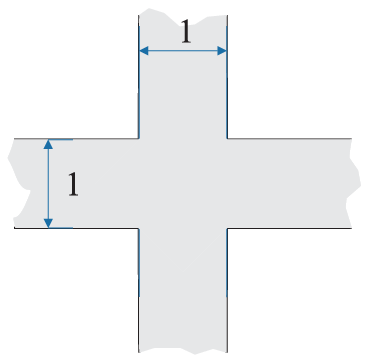}
\centerline{c)}
\end{minipage}
\end{center}
\caption{Thin rectangular lattice (a), its one-dimensional model (b) and the planar cruciform waveguide (c).}
 \label{fig1}
\end{figure}

\begin{figure}[ht]
\begin{minipage}[ht]{0.3\linewidth}
\includegraphics[width=0.85\linewidth]{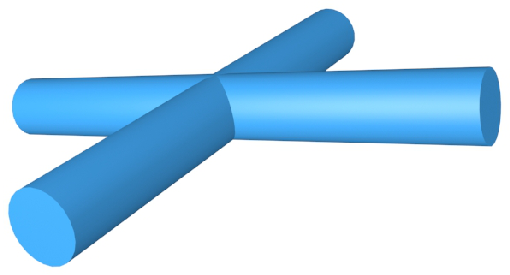}
\end{minipage}
\begin{minipage}[ht]{0.3\linewidth}
\includegraphics[width=0.85\linewidth]{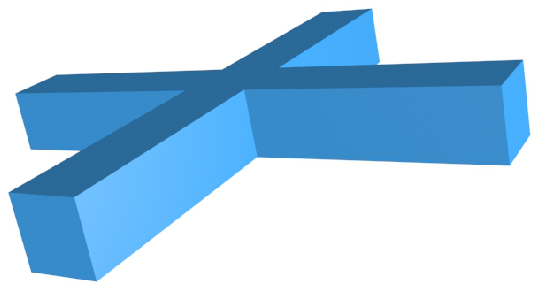}
\end{minipage}
\begin{minipage}[ht]{0.3\linewidth}
\includegraphics[width=0.85\linewidth]{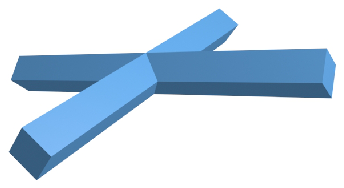}
\end{minipage}
\begin{minipage}[ht]{0.3\linewidth}
\centerline{a)}
\end{minipage}
\begin{minipage}[ht]{0.3\linewidth}
\centerline{b)}
\end{minipage}
\begin{minipage}[ht]{0.3\linewidth}
\centerline{c)}
\end{minipage}
\caption{Spatial cruciform waveguides, circular (a), square (b) and rhombic (c).}
 \label{fig2}
\end{figure}

\section{Statement of the problem}
Let $Q^{H}=Q_1^H\cup Q_2^H$ be a union of the cylinders
\begin{equation}
\label{WGD}
 Q_j^H=\Big\{x=( x_1, x_2, x_3)\in \mathbb{R}^3
\colon  (x_{3-j},H^{-1}x_3)\in \omega\Big\},
\end{equation}
where $H>0$ is a parameter and $\omega\subset \mathbb{R}^2$ is a domain which is enveloped by a Lipschitz contour  $\partial \omega$
and contains the origin $O=(0,0)$. We consider two particular cases:

({\bf i}) the diamond-shaped cross-section with the unscaled right-angled rhombic prototype 
$$
\omega^\lozenge=\Big\{(y,z)\in\mathbb{R}^2\colon|y|+|z|<1/2\Big\},
$$

({\bf ii}) the ellipsoidal cross-section with the unscaled circular prototype
$$
\omega^\circ=\Big\{(y,z)\in\mathbb{R}^2\colon|y|^2+|z|^2<1/4\Big\}.
$$

\begin{figure}[ht]
\begin{minipage}[ht]{0.4\linewidth}
\includegraphics[width=\linewidth]{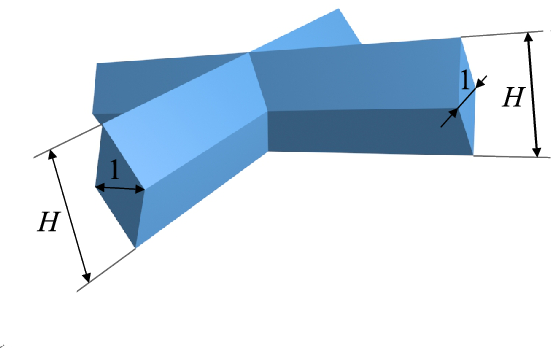}
\end{minipage}
\begin{minipage}[ht]{0.4\linewidth}
\includegraphics[width=\linewidth]{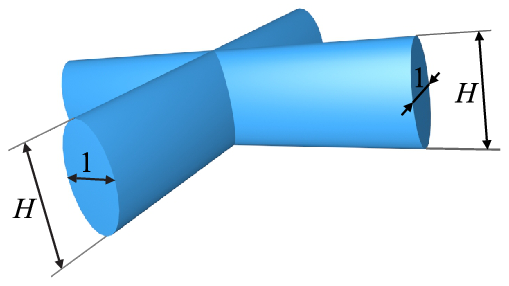}
\end{minipage}\qquad
\begin{minipage}[ht]{0.4\linewidth}
\centerline{a)}
\end{minipage}\qquad
\begin{minipage}[ht]{0.4\linewidth}
\centerline{b)}
\end{minipage}
\caption{Cruciform waveguides with high ($H\gg 1$) rhombic (a) and ellipsoid (b) cross-section of cylinders }
\end{figure}

We are able to treat the case of big $H$, that is $H\to+\infty$,
while a desirable information when $H\to+0$ is still unattainable for us.

We consider the Dirichlet spectral problem for the Laplace operator
\begin{equation} \label{DP}
 -\Delta_x u(x)=\lambda u(x), \,\,  x\in Q^{H} ,\quad
u( x)=0,\,\,  x\in \partial Q^{H}\,.
\end{equation}
Its variational formulation reads:
\begin{equation}
\label{Var}
(\nabla_x u,\nabla_x v)_{H}=\lambda (u,v)_H,\quad v\in H^1_0(Q^H),
\end{equation}
where $(\cdot,\cdot)_H$ is the natural inner product in the Lebesgue space $L^2(Q^H)$
and $H^1_0(Q^H)$ is a subspace of functions in the Sobolev space $H^1(Q^H)$ vanishing
at the boundary. Since the bilinear form on the left of \eqref{Var}
is closed and positive definite, the formulation gives rise to unbounded
positive definite and self-adjoint operator $\mathcal{A}^H$ in the Hilbert space $L^2(Q^H)$.

It is known that the continuous spectrum $\sigma_c^H$ of the operator $\mathcal{A}^H$ is the semi-axis $[\lambda_\dagger^H,+\infty)$
where the principal threshold $\lambda_\dagger^H:=\lambda_\dagger(Q^H)>0$ coincides with the first eigenvalue of the
Dirichlet problem on the cross-section in \eqref{WGD}
$$
\omega^H=\{(y,z)\in\mathbb{R}^2:(y,H^{-1}z)\in \omega\}.
$$

According to \cite{Freitas, na539} and \cite{BorFre1, NaPeTa} the threshold value admits the asymptotic form
\begin{equation}
\label{cutoff}
\lambda_\dagger^H=\pi^2+\mu_\dagger H^{-\alpha} +O(H^{-2\alpha}),\quad H\to+\infty,
\end{equation}
where $\alpha=\alpha^\lozenge=2/3$ and $\alpha=\alpha^\circ=1/2$. Moreover $\mu_\dagger$
is the smallest eigenvalue of either the Airy equation
\begin{equation}
\label{airy}
-\partial^2_\zeta w(\zeta)+4\pi^2|\zeta| w(\zeta) =\nu w(\zeta), \quad \zeta\in\mathbb{R}\,
\end{equation}
in case ({\bf i}), or the harmonic oscillator equation
\begin{equation}
\label{haros}
-\partial^2_\zeta w(\zeta)+4\pi^2\zeta^2 w(\zeta) =\nu w(\zeta), \quad \zeta\in\mathbb{R}\,
\end{equation}
in case ({\bf ii}).

The total multiplicity $\# \sigma_d^H$ of the discrete spectrum \eqref{0}
$$
0<\lambda_1^H< \lambda_2^H\leq\ldots \leq \lambda_{\#\sigma_d^H}^H<\lambda_\dagger^H,
$$
of the operator $\mathcal{A}^H$ is finite and $\#\sigma_d^H\geq 1$ according to \cite{na480}.

The main result of the paper implies the following assertion for sufficiently high diamonds and ellipses.

\begin{theorem}
\label{Main}
For any $N\in \mathbb{N}=\{1,2,3,\ldots\}$, one finds positive $H_N$ and  $C_N$ such that
the inequality $H>H_N$ guarantees  that $\# \sigma^H_d \geq N$ and
\[\lambda_N^H< \Lambda_\Pi+C_N H^{-\alpha}<\pi^2<\lambda_\dagger^H,\]
where $\alpha=\alpha^\lozenge=2/3$ and $\alpha=\alpha^\circ=1/2$.
\end{theorem}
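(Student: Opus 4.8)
The plan is to bound $\lambda_N^H$ from above by the max--min (Courant--Fischer) principle applied to the form in \eqref{Var}: I will exhibit, for each fixed $N$, an $N$-dimensional subspace $E\subset H^1_0(Q^H)$ on which the Rayleigh quotient does not exceed $\Lambda_\Pi+C_NH^{-\alpha}$. Since $\Lambda_\Pi<\pi^2$ is a fixed gap and, by \eqref{cutoff}, $\lambda_\dagger^H=\pi^2+\mu_\dagger H^{-\alpha}+O(H^{-2\alpha})>\pi^2$ with $\mu_\dagger>0$, choosing $H_N$ so large that $C_NH^{-\alpha}<\pi^2-\Lambda_\Pi$ for $H>H_N$ forces $\Lambda_\Pi+C_NH^{-\alpha}<\pi^2<\lambda_\dagger^H$. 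The $N$-th max--min value then lies strictly below the bottom $\lambda_\dagger^H$ of the essential spectrum, hence it equals the genuine $N$-th eigenvalue; this yields simultaneously $\#\sigma_d^H\geq N$ and the required estimate $\lambda_N^H<\Lambda_\Pi+C_NH^{-\alpha}<\pi^2<\lambda_\dagger^H$.

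The trial functions come from an adiabatic ansatz in the long variable $x_3$. For fixed $x_3$ the planar section of $Q^H$ in $(x_1,x_2)$ is the scaled cross $d(x_3)\,\Pi$, where $d(x_3)$ is the common arm half-width normalized by $d(0)=1$; explicitly $d(x_3)=1-2|x_3|/H$ in case (\textbf{i}) and $d(x_3)=\sqrt{1-4x_3^2/H^2}$ in case (\textbf{ii}). Rescaling the model eigenfunction of \eqref{DPi} I set $\Phi_{x_3}(x_1,x_2)=d(x_3)^{-1}U_\Pi\bigl(x_1/d(x_3),x_2/d(x_3)\bigr)$, which for every $x_3$ is the $L^2$-normalized first Dirichlet eigenfunction of $-\partial_{x_1}^2-\partial_{x_2}^2$ on $d(x_3)\,\Pi$, with eigenvalue $\Lambda(x_3)=\Lambda_\Pi d(x_3)^{-2}$. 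A trial function $v(x)=\Phi_{x_3}(x_1,x_2)\,W(x_3)$ vanishes on $\partial Q^H$ because $U_\Pi$ vanishes on $\partial\Pi$, and since $\Phi_{x_3}$ vanishes on the lateral boundary the transport term in $\partial_{x_3}\!\int_{d(x_3)\Pi}|\Phi_{x_3}|^2=0$ drops, so the cross term $\int\Phi_{x_3}\,\partial_{x_3}\Phi_{x_3}$ vanishes and a direct computation gives
\[\|\nabla v\|_H^2-\Lambda_\Pi\|v\|_H^2=\int_{\mathbb{R}}\Bigl(|W'(x_3)|^2+\bigl(\Lambda(x_3)-\Lambda_\Pi\bigr)|W(x_3)|^2\Bigr)\,dx_3+\cR[W],\]
where the geometric remainder $\cR[W]=\int_{\mathbb{R}}|W(x_3)|^2\bigl(\int_{d(x_3)\Pi}|\partial_{x_3}\Phi_{x_3}|^2\,dx_1dx_2\bigr)\,dx_3$ is controlled, by the scaling of $\Phi_{x_3}$, through $\sup|d'(x_3)/d(x_3)|^2$ on $\mathrm{supp}\,W$.

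This reduces the problem to the one-dimensional operator $\cL^H=-\partial_{x_3}^2+\bigl(\Lambda(x_3)-\Lambda_\Pi\bigr)$ with the nonnegative confining potential $\Lambda_\Pi\bigl(d(x_3)^{-2}-1\bigr)$. The key observation is that $\cL^H$ is identical to the effective operator governing the threshold in \eqref{cutoff} except that the transverse value $\pi^2$ is replaced by $\Lambda_\Pi$ in front of the same profile $d(x_3)^{-2}$: near $x_3=0$ the potential is $4\Lambda_\Pi H^{-1}|x_3|+O(H^{-2}x_3^2)$ in case (\textbf{i}) and $4\Lambda_\Pi H^{-2}x_3^2+O(H^{-4}x_3^4)$ in case (\textbf{ii}). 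Hence the same Airy/harmonic-oscillator rescaling that produces \eqref{airy}--\eqref{haros} (with $4\pi^2$ replaced by $4\Lambda_\Pi$) and the references cited for \eqref{cutoff} give that the lowest $N$ eigenvalues of $\cL^H$ satisfy $e_N^H\leq C_N H^{-\alpha}$ with the \emph{same} $\alpha$. Taking $W_1,\dots,W_N$ to be the corresponding eigenfunctions, multiplied by a fixed cutoff supported in $|x_3|<H/4$ to keep $v_k=\Phi_{x_3}W_k$ admissible, the $L^2$-normalization of $\Phi_{x_3}$ makes $\langle v_j,v_k\rangle_H=\int W_jW_k\,dx_3$, so $E=\mathrm{span}\{v_1,\dots,v_N\}$ is genuinely $N$-dimensional and the quadratic form is diagonalized up to $\cR$. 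For $v=\sum c_kv_k$ this gives $\|\nabla v\|_H^2-\Lambda_\Pi\|v\|_H^2\leq(e_N^H+\varepsilon_H)\|v\|_H^2$ with $\varepsilon_H=o(H^{-\alpha})$, and the claimed bound follows.

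The main obstacle is the rigorous, uniform-in-$H$ control of the corrections: one must show that the remainder $\cR[W]$ and the error from cutting off the $\cL^H$-eigenfunctions near the degeneration $d(x_3)\to0$ at the tips $|x_3|\to H/2$ are both $o(H^{-\alpha})$ on the span of $W_1,\dots,W_N$. This requires the finiteness of $\int_\Pi|U_\Pi+\xi\cdot\nabla U_\Pi|^2\,d\xi$ (guaranteed by the exponential decay of $U_\Pi$ along the arms, $\Lambda_\Pi<\pi^2$) and the localization of the $W_k$ at scale $H^{1/3}$ in case (\textbf{i}) and $H^{1/2}$ in case (\textbf{ii}), both $\ll H$, so that the cutoff costs only an exponentially small amount and the quadratic approximation of the potential is self-consistent. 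The non-smoothness of $d(x_3)$ at $x_3=0$ in case (\textbf{i}) is harmless since it enters only through $(d')^2$ and through the Airy potential $|x_3|$, but it must be accounted for when bounding $\cR$.
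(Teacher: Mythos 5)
Your proposal is correct and rests on the same core strategy as the paper: the max--min principle \eqref{mm} applied to an explicitly constructed $N$-dimensional subspace of adiabatic product trial functions (scaled copy of $U_\Pi$ on the cross-section times a longitudinal profile localized near $z=0$ on the scale $H^{\alpha/2}$, with a cutoff keeping the support away from the degenerating tips), followed by the observation that max--min values below $\lambda_\dagger^H$ are genuine discrete eigenvalues. Where you differ is in the bookkeeping, and the difference is worth recording. The paper inserts the eigenfunctions $w_n$ of the \emph{model} equation \eqref{4} (Airy, resp.\ harmonic oscillator, with $4\pi^2$ replaced by $4\Lambda_\Pi$) directly into the three-dimensional ansatz \eqref{phiH} built on the unnormalized transverse factor $U_\Pi(h(z)^{-1}y)$, and then proves two Gram-matrix estimates, Lemmas \ref{AlOrt} and \ref{GrAlmOrt}, to feed the max--min principle. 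You instead normalize the transverse factor by $d(x_3)^{-1}$, which makes the decomposition $\|\nabla v\|_H^2-\Lambda_\Pi\|v\|_H^2=q^H[W]+\cR[W]$ \emph{exact}, with $q^H$ the form of the effective one-dimensional operator $\cL^H$ carrying the exact potential $\Lambda_\Pi(d^{-2}-1)$, and makes the $L^2$ Gram matrix exactly $\int W_jW_k\,dx_3$; this eliminates the need for an analogue of Lemma \ref{AlOrt} and replaces Lemma \ref{GrAlmOrt} by a one-dimensional eigenvalue bound plus control of $\cR$. The price is that two steps are only sketched in your write-up: the bound $e_N^H\leq C_NH^{-\alpha}$ for $\cL^H$, and the decay of the $\cL^H$-eigenfunctions needed to show that the cutoff at $|x_3|<H/4$ costs $o(H^{-\alpha})$. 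Both are fillable by exactly the computations the paper performs: take the rescaled Airy/Hermite functions $w_n(H^{-\alpha/2}x_3)$, cut off as in \eqref{chi}, as one-dimensional trial functions for $\cL^H$; then the eigenvalue bound follows because the linearization error $\Lambda_\Pi(d^{-2}-1)-4\Lambda_\Pi|x_3|H^{-1}=O(x_3^2H^{-2})$ is of higher order on the localization scale, and no separate decay estimate for $\cL^H$-eigenfunctions is required since the explicit asymptotics \eqref{AiT} (resp.\ \eqref{51}) provide it. Finally, note that in case ({\bf ii}) your exact potential $4\Lambda_\Pi x_3^2H^{-2}+O(x_3^4H^{-4})$ yields oscillator levels of order $H^{-1}$, which is stronger than the stated $C_NH^{-\alpha^\circ}$ with $\alpha^\circ=1/2$; since the theorem asserts only an upper bound, this serves a fortiori.
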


Our scheme of the proof works for both cases ({\bf i}) and ({\bf ii})
in a very similar manner. Namely, based on the asymptotic analysis performed in the papers \cite{na259, FredSol, Freitas, BorFre1, na539, NaPeTa}
and others, we use basic properties of eigenfunctions of the ordinary differential equations \eqref{airy} and \eqref{haros} with 
$\lambda_\dagger(\Pi)=\pi^2$
replaced by $\Lambda_\Pi$, prepare appropriate, in particular ``almost orthonormalized'', trial functions and finally apply
the max-min principle for the operator $\mathcal{A}^H$ to detect its eigenvalues below the cutoff value \eqref{cutoff}. The
conclusive observation demonstrates that the number of constructed linear independent trial functions grows infinitely when $H\to+\infty$.
This similarity and common fundamental properties of eigenfunctions in \eqref{airy} and \eqref{haros} as well as recalling the above-mentioned
analyses allow us to focus on a bit more complicated case ({\bf i}) and only to outline some features of case ({\bf ii}).

\section{Total multiplicity in the diamond case}
Our construction of announced trial functions $\Phi_j^H$ in the max-min principle, see \eqref{mm} below, imitates the asymptotic procedures
proposed in the papers \cite{Freitas, na539} where it was proved that eigenfunctions of the Dirichlet problem in thin polygons and polyhedra are
localized in the vicinity of certain vertices and/or edges while in the waveguide $Q^H$ with $H>1$ the role of such concentrator
is taken by four broken edges in $\{x\in \partial Q^H: z=x_3=0\}$, that is, the boundary of the planar waveguide $\Pi$ of width 1, the maximal one
among all cross-sections of the polyhedron $Q^H$,
\begin{equation}
\label{PiT}
\Pi_\tau=\{y : (y,\tau)\in Q^H\}  \mbox{ of width } h(\tau)=1-\frac{2}{H}|\tau|, \quad \tau\in [-\frac{H}{2},\frac{H}{2}].
\end{equation}

We accept the ansatz
\begin{equation}
\label{phiH}
\Phi_n^H(x)=H^{-\alpha/4}\chi_H(\zeta)w_n(\zeta)U_\Pi((h(z)^{-1}y), \quad n\geq1,
\end{equation}
where $y=(x_1,x_2)$ are the ``horizontal'' coordinates and
\begin{equation}
\label{zetaH}
\zeta=H^{-\alpha/2}z
\end{equation}
is the compressed ``vertical'' coordinate with the exponent $\alpha=\alpha^\lozenge=2/3$. Furthermore,
$\chi_H(\zeta)=\chi(2H^{-\alpha}\zeta)$,
$\chi\in C^\infty(\mathbb{R})$ is a cut-off function,
\begin{equation}
\label{chi}
 \chi(\tau)=1 \mbox{ for } |\tau|<1/4\quad \mbox{and} \quad \chi(\tau)=0 \mbox{ for } |\tau|>1/2,
\end{equation}
and the last multiplier in \eqref{phiH} involves the
eigenfunction $U_\Pi$ in $\Pi$, so that it is defined on the cross-section $\Pi_z$, see \eqref{PiT}.
We emphasize that the function $\Phi_n^H$ vanishes on the boundary $\partial Q^H$ because the multiplier
$U^H(x)=U_\Pi(h(z)^{-1}y)$ is taken from the Dirichlet problem \eqref{DPi}. Finally, the ``approximate'' eigenvalue takes the form
\begin{equation}
\label{LamH}
\Lambda_\Pi+\mu_n H^{-\alpha},
\end{equation}
while $\{\mu_n, w_n\}$ in \eqref{LamH} and \eqref{phiH} is an eigenpair of the Airy equation
\begin{equation}
\label{4}
-\partial^2_\zeta w(\zeta)+4\Lambda_\Pi |\zeta| w(\zeta)=\mu w(\zeta), \quad \zeta\in\mathbb{R},
\end{equation}
in the class of functions decaying at infinity. It is known,
cf. \cite[Th. XIII.67]{RS4}, that the spectrum of the equation \eqref{4} is discrete and composes the monotone unbounded sequence of
simple eigenvalues
$$
0<\mu_1<\mu_2<\ldots<\mu_n<\ldots\to+\infty,
$$
while the corresponding eigenfunctions $w_1$, $w_2$, \ldots, $w_n$, \ldots can be subject
to the normalization and orthogonality conditions
\begin{equation}
\label{4.5}
(w_j,w_k)_{L^2(\mathbb{R})}=\delta_{j,k},\quad j,k\in\mathbb{N},
\end{equation}
where $\delta_{j,k}$ stands for the Kronecker symbol.
Moreover, $w_k$ can be expressed in terms of the Airy function,
\begin{equation}
\label{wnzet}
w_n(\zeta)=a_n \mathrm{Ai}((4\Lambda_\Pi)^{1/3}|\zeta|-(4\Lambda_\Pi)^{-2/3}\mu_n), \quad
\end{equation}
where $a_n$ is a normalization factor.
According to \cite[10.4.59]{AS} (see, e.g., the book \cite{BB} for a proof),
\begin{equation}
\label{AiT}
\mathrm{Ai}(t)\sim \frac{1}{2}\pi^{-1/2}t^{-1/4}e^{-T}\sum_0^{+\infty}
(-1)^kc_k T^{-k},\quad t>0,
\end{equation}
where
\[T=\frac{2}{3}t^{3/2},\;c_0
=1,\;c_k=\frac{\Gamma\left(3k+1/2\right)}
{54^kk!\Gamma\left(k+1/2\right)},
\]
and $ \Gamma $ denotes the gamma function.
In view of the exponential decay of \eqref{wnzet} caused by \eqref{AiT},
the following inequality becomes evident.

\begin{lemma} \label{Ai}
The inequality
\begin{equation}
\label{5}
\Big|\int_{-H^{\alpha}/4}^{H^{\alpha}/4}w_j(\zeta)w_k(\zeta)d\zeta-\delta_{j,k}\Big|\leq c_N H^{-\alpha}\,,
\end{equation}
is valid for any $j,k=1,\ldots, N$ and with some factor $c_N$ depending on $N\in\mathbb{N}$ and $\alpha=\alpha^\lozenge$.
\end{lemma}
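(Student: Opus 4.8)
The plan is to exploit the fact that the full-line integral is \emph{exactly} $\delta_{j,k}$ by the normalization and orthogonality conditions \eqref{4.5}, so that the quantity to be estimated is nothing but a tail integral. Writing
\[
\int_{-H^{\alpha}/4}^{H^{\alpha}/4}w_j(\zeta)w_k(\zeta)\,d\zeta-\delta_{j,k}
=-\int_{|\zeta|>H^{\alpha}/4}w_j(\zeta)w_k(\zeta)\,d\zeta,
\]
the problem reduces to bounding the contribution of the region $|\zeta|>H^{\alpha}/4$. By the Cauchy--Schwarz inequality this tail is controlled by the product of the two single-function tails $\bigl(\int_{|\zeta|>R}w_n(\zeta)^2\,d\zeta\bigr)^{1/2}$ with $R=H^{\alpha}/4$, so it suffices to estimate $\int_{R}^{+\infty}w_n(\zeta)^2\,d\zeta$ for each $n\le N$.

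First I would insert the explicit representation \eqref{wnzet} and apply the asymptotic formula \eqref{AiT}. For $\zeta$ large the argument of the Airy function behaves like $(4\Lambda_\Pi)^{1/3}|\zeta|$ up to the bounded shift $(4\Lambda_\Pi)^{-2/3}\mu_n$, so that $w_n(\zeta)^2$ decays like a constant times $|\zeta|^{-1/2}\exp\bigl(-\tfrac{4}{3}(4\Lambda_\Pi)^{1/2}|\zeta|^{3/2}\bigr)$. Integrating from $R$ to $+\infty$, the leading exponential is governed by its value at the lower endpoint, giving a bound of the form $C\,e^{-c R^{3/2}}$ with $c,C>0$ depending only on $\Lambda_\Pi$ and on $n$. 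With $R=H^{\alpha}/4$ this yields a tail estimate of order $\exp(-c' H^{3\alpha/2})$; since $\alpha=\alpha^\lozenge=2/3$ gives $3\alpha/2=1$, the decay is in fact exponential in $H$.

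To finish I would take the maximum over the finite index set, using that the normalization factors $a_n$ and the eigenvalues $\mu_n$ are bounded for $n\le N$, and thereby absorb all $n$-dependence into a single constant $c_N$. A super-exponentially small quantity of size $\exp(-c'H^{3\alpha/2})$ is clearly dominated by $c_N H^{-\alpha}$ for all $H$ after enlarging $c_N$ if necessary, which establishes \eqref{5}. The only point requiring genuine care—rather than a real obstacle—is the uniformity in $j,k\le N$ and the correct bookkeeping of the exponential rate coming from \eqref{AiT}; the crude bound afforded by Cauchy--Schwarz is amply sufficient because the exponential in $H$ defeats the required power $H^{-\alpha}$ with room to spare, which is exactly why the statement is announced as evident.
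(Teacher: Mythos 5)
Your proposal is correct and follows exactly the route the paper intends: the paper dispenses with a formal proof, remarking only that the inequality ``becomes evident'' from the exponential decay of the eigenfunctions \eqref{wnzet} furnished by the Airy asymptotics \eqref{AiT}, and your argument (reduction to the tail via the full-line orthonormality \eqref{4.5}, Cauchy--Schwarz, and the tail bound $\exp(-c'H^{3\alpha/2})\ll H^{-\alpha}$ uniformly in $j,k\le N$) is precisely the elaboration of that remark. No discrepancies to report.
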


In the same way as in \cite{Freitas, na539} the differential equation \eqref{4} can be derived
by using the coordinate \eqref{zetaH}, inserting the asymptotic ans\"atze \eqref{phiH}, \eqref{LamH}
into the original problem \eqref{DP}, and collecting the terms of order $H^{-5\alpha/4}$.
To make such formal asymptotic analysis rigorous, it is necessary to estimate discrepancies generated
in the Helmholtz equation in $Q^H$. Moreover, aiming to employ the max-min principle, we have
to evaluate reciprocal scalar products of functions \eqref{phiH}. We will do these step by step and, first of all,
observe that the last factor $U^H(x)$ in \eqref{phiH} enjoys the relations
\begin{equation}
\label{UHPr}
\int_{\Pi_z}|U^H(x)|^2\,dy=h(z)^{2},\quad
\int_{\Pi_z}|\nabla_{y}U^H(x)|^2\,dy=\Lambda_\Pi.
\end{equation}

\begin{lemma}
\label{AlOrt}
For any $N\in\mathbb{N}$, there exists $c_N$ such that
\begin{equation*}
\Big|(\Phi_j^H,\Phi_k^H)_H-\delta_{j,k}\Big|\leq c_N H^{-\alpha^\lozenge}, \quad j,k=1,\ldots, N.
\end{equation*}
In other words, the functions $\Phi_1^H, \ldots, \Phi_N^H$ are ``almost orthonormal'' in $L^2(Q^H)$
for a sufficiently big $H$, in particular, they are linear independent.
\end{lemma}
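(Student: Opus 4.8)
The plan is to compute the inner product $(\Phi_j^H,\Phi_k^H)_H$ directly from the explicit ansatz \eqref{phiH} and reduce it, via the product structure, to the one-dimensional integral already controlled by Lemma \ref{Ai}. First I would substitute the definition \eqref{phiH} and write the integral over $Q^H$ in the coordinates $(y,z)$, treating $z$ as the outer variable ranging over $[-H/2,H/2]$ and $y$ as ranging over the cross-section $\Pi_z$ from \eqref{PiT}. Because the factors $\chi_H(\zeta)w_n(\zeta)$ depend only on $z$ (through $\zeta=H^{-\alpha/2}z$) while $U^H(x)=U_\Pi(h(z)^{-1}y)$ depends on both, the integral factors as
\begin{equation*}
(\Phi_j^H,\Phi_k^H)_H=H^{-\alpha/2}\int_{-H/2}^{H/2}\chi_H(\zeta)^2 w_j(\zeta)w_k(\zeta)\Big(\int_{\Pi_z}|U^H(x)|^2\,dy\Big)\,dz.
\end{equation*}
The inner $y$-integral is evaluated by the first relation in \eqref{UHPr}, giving the factor $h(z)^2=(1-\tfrac{2}{H}|z|)^2$.

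Next I would change the variable from $z$ to the compressed coordinate $\zeta$ using \eqref{zetaH}, so that $dz=H^{\alpha/2}d\zeta$ and the prefactor $H^{-\alpha/2}$ cancels. The support of $\chi_H(\zeta)=\chi(2H^{-\alpha}\zeta)$ lies in $|\zeta|\le H^{\alpha}/4$ by \eqref{chi}, which is precisely the truncation interval appearing in Lemma \ref{Ai}. The integral thus becomes
\begin{equation*}
(\Phi_j^H,\Phi_k^H)_H=\int_{-H^{\alpha}/4}^{H^{\alpha}/4}\chi_H(\zeta)^2\,h(H^{\alpha/2}\zeta)^2\,w_j(\zeta)w_k(\zeta)\,d\zeta.
\end{equation*}
Here $h(H^{\alpha/2}\zeta)^2=(1-2H^{\alpha/2-1}|\zeta|)^2$, and since $\alpha=2/3$ we have $\alpha/2-1=-2/3=-\alpha$, so on the support $|\zeta|\le H^{\alpha}/4$ the correction $2H^{-\alpha}|\zeta|$ is bounded by $1/2$ and is of order $H^{-\alpha}|\zeta|$. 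The strategy is then to write $\chi_H^2 h^2=1+(\chi_H^2-1)+\chi_H^2(h^2-1)$ and estimate each deviation separately against the target $\delta_{j,k}$.

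The main work, and the point where I expect the real estimate to live, is bounding the two error terms after subtracting $\delta_{j,k}$ via Lemma \ref{Ai}. The term with $\chi_H^2-1$ is supported in $H^{\alpha}/4\le|\zeta|\le H^{\alpha}/2$, where the exponential decay of $w_j,w_k$ inherited from \eqref{wnzet} and \eqref{AiT} makes the integral superpolynomially small, hence absorbed into $c_N H^{-\alpha}$; this is the same decay mechanism that proved Lemma \ref{Ai}. The genuinely dominant contribution comes from $\chi_H^2(h^2-1)$, whose integrand is bounded in modulus by $C H^{-\alpha}|\zeta|\,|w_j(\zeta)w_k(\zeta)|$ plus an $O(H^{-2\alpha}|\zeta|^2)$ remainder; I would control $\int|\zeta|\,|w_j w_k|\,d\zeta$ by a fixed constant $M_N$ using the rapid decay of the Airy eigenfunctions, which shows this term is $O(H^{-\alpha})$ with a constant depending only on $N$. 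Combining the three pieces with Lemma \ref{Ai} yields $|(\Phi_j^H,\Phi_k^H)_H-\delta_{j,k}|\le c_N H^{-\alpha}$. The obstacle is purely bookkeeping: tracking that every constant depends only on $N$ (through the finitely many eigenfunctions $w_1,\dots,w_N$ and their weighted $L^1$ and $L^2$ norms) and not on $H$, so that the bound is uniform as $H\to+\infty$; linear independence then follows for large $H$ since the Gram matrix is within $c_N H^{-\alpha}$ of the identity.
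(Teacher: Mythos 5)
Your proof is correct and follows essentially the same route as the paper's: reduce the inner product to a one-dimensional $\zeta$-integral via the first relation in \eqref{UHPr}, expand $h^2=1-4H^{-\alpha}|\zeta|+4H^{-2\alpha}|\zeta|^2$, and handle the leading term by Lemma \ref{Ai} and the corrections by the exponential decay of the Airy eigenfunctions (your three-way splitting of the integrand just makes explicit what the paper leaves implicit about the cutoff). Only a trivial slip: inside the truncation interval the set where $\chi_H^2-1\neq 0$ is $H^{\alpha}/8\le|\zeta|\le H^{\alpha}/4$, not $H^{\alpha}/4\le|\zeta|\le H^{\alpha}/2$, which does not affect the argument.
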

\begin{proof}
Using \eqref{UHPr}, we calculate the scalar product
\begin{equation*}
(\Phi_j^H,\Phi_k^H)_H=
\int_{-H^{2/3}/2}^{H^{2/3}/2}\chi^H(\zeta)^2w_j(\zeta)w_k(\zeta)h(H^{-1/3}\zeta)^{2}d\zeta\,.
\end{equation*}
Since $h(H^{-1/3}\zeta)^{2}=1-4H^{-2/3}|\zeta|+4H^{-4/3}|\zeta|^2$,
according to \eqref{5} and \eqref{chi} it suffices to observe that the integral
$$
\int_{-H^{2/3}/2}^{H^{2/3}/2} \chi^H(\zeta)^2 (|\zeta|+|\zeta|^2)|w_j(\zeta)||w_k(\zeta)|d\zeta
$$
converges due to the exponential decay of $w_j$ and $w_k$.
\end{proof}

\begin{lemma} \label{GrAlmOrt}
There exist positive $H_{N}$ and $c_{N}$ such that
for $H>H_{N}$ we have
\begin{equation} \label{Grad}
\Big|(\nabla_x\Phi_j^H,\nabla_x\Phi_k^H)_H-\Lambda_\Pi\delta_{j,k}\Big|<
c_{N}H^{-\alpha^\lozenge}, \quad j,k=1,\ldots, N.
\end{equation}
\end{lemma}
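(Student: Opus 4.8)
The plan is to split the Dirichlet integral into its transverse and longitudinal parts,
\[
(\nabla_x\Phi_j^H,\nabla_x\Phi_k^H)_H=\int_{Q^H}\nabla_y\Phi_j^H\cdot\nabla_y\Phi_k^H\,dx+\int_{Q^H}\partial_z\Phi_j^H\,\partial_z\Phi_k^H\,dx,
\]
and to show that the transverse integral already produces the main term $\Lambda_\Pi\delta_{j,k}$ up to $O(H^{-\alpha^\lozenge})$, while the whole longitudinal integral is of order $H^{-\alpha^\lozenge}$.

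For the transverse part I would exploit that the cut-off and the Airy factor in \eqref{phiH} do not depend on $y$, so that $\nabla_y\Phi_n^H=H^{-\alpha/4}\chi_H(\zeta)w_n(\zeta)\nabla_y U^H$, and integrate first over the cross-section $\Pi_z$ at fixed $z$. The second identity in \eqref{UHPr} gives $\int_{\Pi_z}|\nabla_y U^H|^2\,dy=\Lambda_\Pi$ independently of $z$; hence, after passing to the variable $\zeta$ of \eqref{zetaH} (the Jacobian $H^{\alpha/2}$ cancelling the prefactor $H^{-\alpha/2}$), the transverse part equals $\Lambda_\Pi\int_{\mathbb R}\chi_H(\zeta)^2 w_j(\zeta)w_k(\zeta)\,d\zeta$. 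Exactly as in the proof of Lemma \ref{AlOrt}, the cut-off properties \eqref{chi}, the exponential decay of $w_j,w_k$ and Lemma \ref{Ai} reduce this to $\Lambda_\Pi\bigl(\delta_{j,k}+O(H^{-\alpha^\lozenge})\bigr)$.

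For the longitudinal part I would estimate the $L^2(Q^H)$-norm of $\partial_z\Phi_n^H$ rather than track individual products. Differentiating \eqref{phiH} by the product rule and using \eqref{zetaH} yields $\partial_z\Phi_n^H=H^{-\alpha/4}\bigl(H^{-\alpha/2}(\chi_Hw_n)'(\zeta)\,U^H+\chi_H(\zeta)w_n(\zeta)\,\partial_z U^H\bigr)$. The cut-off \eqref{chi} confines the support to $|\zeta|<H^{\alpha}/4$, i.e.\ $|z|<H/4$, where the width in \eqref{PiT} obeys $h(z)\ge 1/2$; consequently the cross-sectional derivative satisfies $\int_{\Pi_z}(\partial_z U^H)^2\,dy=O(H^{-2})$, controlled by finite weighted moments of $U_\Pi$ that are available because the bound state decays exponentially in the arms of $\Pi$. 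Bookkeeping the Jacobian $H^{\alpha/2}$ then shows that the first term has $L^2(Q^H)$-norm $O(H^{-\alpha/2})$ (using convergence of $\int_{\mathbb R}|w_n'|^2\,d\zeta$ and that $\chi_H'$ is supported where $w_n$ is exponentially small), while the width-variation term has norm $O(H^{-1})$. Since $\alpha^\lozenge/2<1$, both norms are $O(H^{-\alpha^\lozenge/2})$, so the Cauchy--Schwarz inequality bounds the longitudinal integral by $O(H^{-\alpha^\lozenge})$.

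Combining the two parts yields \eqref{Grad} with $\alpha=\alpha^\lozenge$. The main obstacle, and the only place where the geometry really enters, is the longitudinal term $\chi_Hw_n\,\partial_z U^H$ arising from the $z$-dependence of the cross-sectional width $h(z)$: one must verify that it is genuinely of lower order, which rests on the two facts isolated above, namely that the cut-off keeps $h(z)$ bounded away from $0$ (so no blow-up of $h(z)^{-1}$ occurs) and that the transverse moments of $U_\Pi$ are finite by exponential decay. I emphasise that the leading longitudinal term reproduces, after integration by parts, the second-derivative part $\int_{\mathbb R}w_j'w_k'\,d\zeta$ of the Airy form \eqref{4}; for the present lemma it suffices to bound it, the precise $H^{-\alpha^\lozenge}$ coefficient carrying $\mu_n$ being absorbed into the error.
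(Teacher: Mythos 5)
Your proposal is correct and follows essentially the same route as the paper: the same split into transverse and longitudinal parts, the main term $\Lambda_\Pi\delta_{j,k}$ extracted from the transverse part via \eqref{UHPr} and Lemma \ref{Ai}, and the longitudinal part shown to be $O(H^{-\alpha^\lozenge})$ using $\|\partial_\zeta w_n;L^2(\mathbb{R})\|^2\le\mu_n$, the smallness of $\partial_\zeta\chi_H$, and the exponential decay of $U_\Pi$ which makes the width-variation term $O(H^{-1})$. The only (cosmetic) difference is bookkeeping: you bound $\|\partial_z\Phi_n^H;L^2(Q^H)\|$ once and apply Cauchy--Schwarz to the whole longitudinal integral, whereas the paper expands it into the four bilinear terms $J_1^{jk},J_2^{jk},J_3^{jk},J_3^{kj}$ and estimates each separately, with the cross terms handled by the same Cauchy--Schwarz argument.
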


\begin{proof}
The scalar product on the left of \eqref{Grad}
is the sum of two expressions
\begin{equation*}
I_1=({\nabla_{y}}\Phi_j^H,{\nabla_{y}}\Phi_k^H)_H\quad\text{and}\quad
I_2=(\partial_z\Phi_j^H,\partial_z\Phi_k^H)_H\,.
\end{equation*}
The second equality in \eqref{UHPr} leads to the formula
\begin{equation*}
I_1
=\Lambda_\Pi\int_{-H^{2/3}/2}^{H^{2/3}/2}|\chi^H(\zeta)|^2 w_j(\zeta)w_k(\zeta)d\zeta.
\end{equation*}
Thus, Lemma \ref{Ai} provides the estimate
$|I_1-\Lambda_\Pi\delta_{j,k}|\leq c_N H^{-2/3}$.

In order to evaluate the scalar product $I_2$ we rewrite it as the sum of four expressions
\[
I_2 =
J_1^{jk}+J_2^{jk}+J_3^{jk}+J_3^{kj},\]
where
\[J_1^{jk}=H^{-1/3}\left(\partial_z(\chi^H w_j)U^H;\partial_z(\chi^H w_k)U^H\right)_H,\]
\[J_2^{jk}=H^{-1/3}\left( \chi^H w_j\,\partial_zU^H;\chi^H w_k\,\partial_z U^H\right)_H,\]
\[J_3^{jk}=H^{-1/3}\left(\partial_z(\chi^H w_j)U^H;\chi^H w_k\,\partial_zU^H\right)_H.\]

The first equality \eqref{UHPr} yields
\begin{equation*}
J_1^{jk}=H^{-2/3}\int_{-H^{2/3}/2}^{H^{2/3}/2}
\partial_\zeta\left(\chi^H(\zeta)w_j(\zeta)\right)\,
{\partial_\zeta}\left(\chi^H(\zeta)w_k(\zeta)\right) |h(H^{1/3} \zeta)|^{2}\,d\zeta.
\end{equation*}
Since the factor $|h(H^{1/3} \zeta)|^{2}$ in the integrand is uniformly bounded for $\zeta\in[-H^{2/3}/2,H^{2/3}/2]$,
it is sufficient to estimate the $L^2(\mathbb{R})$-norm of $\partial_\zeta\left(\chi^H w_j\right)$ for $j=1,\ldots, N$.
The solution $w_j$ of the differential equation \eqref{4} satisfies
$$
\|\partial_\zeta w_j;L^2(\mathbb{R})\|^2 \leq \mu_j\|w_j;L^2(\mathbb{R})\|^2<\mu_N\,.
$$
Combining the evident inequality
$|\partial_\zeta \chi^H(\zeta)|\leq c H^{-2/3}$ with the
normalization condition \eqref{4.5}, we conclude that $J^{jk}_{1}\leq c_N H^{-2/3}$.

Furthermore, we have
\begin{equation}
\label{J2}
J_{2}^{jk}=
H^{-1/3}\int_{-H/2}^{H/2}|\chi^H(\zeta)|^2 w_j(\zeta)w_k(\zeta)
|\partial_z (h(z)^{-1})|^2\int_{\Pi_z}
\left|y  \nabla_y U_\Pi(h(z)^{-1}y)\right|^2\,dydz.
\end{equation}
The Fourier analysis of the problem \eqref{DPi} ensures that the eigenfunction $U_\Pi$
and its derivatives decay at infinity at the exponential rate $e^{-\sqrt{\pi^2-\Lambda_\Pi}\,|y|}$.
Thus, the last integral in \eqref{J2}
is bounded and we write
\[J_{2}^{jk}\leq C\int_{-H^{2/3}/2}^{H^{2/3}/2}|\chi^H(\zeta)|^2w_j(\zeta)w_k(\zeta)
|\partial_z h(H^{1/3}\zeta)|^2 |h(H^{1/3}\zeta)|^{-2}\,d\zeta \,.\]
Recalling formula for $h(\tau)$ in \eqref{PiT}, we repeat the above argumentation and derive
the inequality $J_2^{jk}\leq c_N H^{-2}$.

We complete the proof by estimating the scalar product $J_3^{jk}$. The bound $c_N H^{-2/3}$
follows from the Cauchy--Swartz inequality:
\[J_{3}^{jk}\leq
\left(J_{1}^{jj}J_{2}^{kk}\right)^{1/2}\leq c_N H^{-2/3}.
\]
\end{proof}

Eigenvalues of the problem \eqref{DP} can be determined by the max-min principle
(see, e.g, \cite[Th. XIII,1]{RS4})
\begin{equation}
\label{mm}
\lambda_  n^H=\sup_{E}\inf_{u\in E\setminus\{0\}}\frac{\|\nabla u;L^2(Q^H)\|^2}{\|u;L^2(Q^H)\|^2},
\end{equation}
where supremum is calculated over all subspaces $E\subset H^1_0(Q_R)$  of co-dimension $n-1$.
Due to Lemma \ref{AlOrt} the functions $\Phi_j^H$, $j=1,\ldots, N$, are linearly independent
for a large $H$, each subspace $E$ in \eqref{mm} contains a linear combination
$\sum_{j=1}^N\alpha_j\Phi^H_j$ with coefficients subject to $\sum_{j=1}^N|\alpha_j|^2=1$. 
In view of Lemmas \ref{AlOrt} and \ref{GrAlmOrt} we
obtain that for $H>H_N$
\begin{multline*}
\frac{\left\|\sum_{j=1}^N\alpha_j\nabla_x \Phi_j^H;L^2(Q^H)\right\|^2}
{\left\|\sum_{j=1}^N\alpha_j\Phi_j^H;L^2(Q^H)\right\|^2}\leq
\frac{\sum_{j=1}^N|\alpha_j|^2\left\|\nabla_x \Phi_j^H;L^2(Q^H)\right\|^2+{\bf C}_N H^{-2/3}}
{\sum_{j=1}^N|\alpha_j|^2\left\|\Phi_j^H;L^2(Q^H)\right\|^2-
{\bf C}_N H^{-2/3}}\leq \nonumber\\
\leq
\Lambda_\Pi+C_N^\lozenge H^{-2/3},
\end{multline*}
where $H_N$, ${\bf C}_N$ and $C_N^\lozenge$ are some positive constants.

We also choose a bound $H_N^\lozenge$ such that, for $H>H_N^\lozenge$,
the relation $\Lambda_\Pi+C_N^\lozenge H^{-2/3}<\lambda_\dagger^H$ holds
for the cutoff value \eqref{cutoff}.
This guarantees that $\sigma_d^H\geq N$.
The proof of Theorem \ref{Main} is completed in case ({\bf i}).

\section{Total multiplicity in ellipsoidal case }
To investigate the cruciform quantum waveguide with high ellipsoidal cross-section, see Section 1,
it is enough to repeat word-to-word our consideration in Section 3 and modify the attendant computations to
a very little degree. First of all, this case requires in \eqref{PiT} for 
\[h(z)=\left(1-\frac{4z^2}{H^2}\right)^{1/2}.\]
With the same argument as in \cite{BorFre1,NaPeTa} we replace the limit one-dimensional spectral problem
\eqref{4} by the following one:
\begin{equation*}
-\partial^2_\zeta w(\zeta)+4\Lambda_\Pi\zeta^2 w(\zeta) =\mu w(\zeta), \quad \zeta\in\mathbb{R}.
\end{equation*}
Now its eigenvalues take the very simple form
\[\mu_n=2\Lambda_\Pi^{1/2}(2n+1),\qquad n\geq0\]
while the corresponding normalized in $L^2(\mathbb{R})$ eigenfunctions can be expressed in terms of the Hermite polynomials $\mathcal{H}_n$
\cite{Kamke, AS} as follows:
\begin{equation}
\label{51}
w_n(\zeta)=(2^n n!\sqrt\pi)^{-1/2}(4\Lambda_\Pi^{1/2})^{1/4}e^{-\Lambda_\Pi^{1/2}{\zeta^2}}\mathcal{H}_n((4\Lambda_\Pi)^{1/4}\zeta).
\end{equation}

These explicit formulas and the exponential decay of the eigenfunctions \eqref{51} lead to assertions similar to
Lemmas 3.1, 3.2 and 3.3 with the usual change $\alpha^\lozenge\mapsto \alpha^\circ$. Hence,
the max-min principle \eqref{mm} applies just in the same way and supports Theorem \eqref{Main} in case ({\bf ii}) too.

\bigskip

\section{Final remarks}
In view of the localization effect for eigenfunctions of the Dirichlet problem on the cross-section $\omega^H$
of the cylinders \eqref{WGD} composing the waveguide $Q$, the shape of $\omega^H$ can be perturbed outside a neighbourhood
of the mid-line $\{(y,z): z=0, |y|<1\}$, cf. Fig. 4. Let $\omega^H_\bigstar$ be a perturbed cross-section while
$\omega^\bigstar=\{(y,z): |z|<1, |y|<h_\bigstar(z)\}$ is its prototype.

\begin{figure}[ht]
\begin{center}
\begin{minipage}[ht]{0.2\linewidth}
\includegraphics[width=0.85\linewidth]{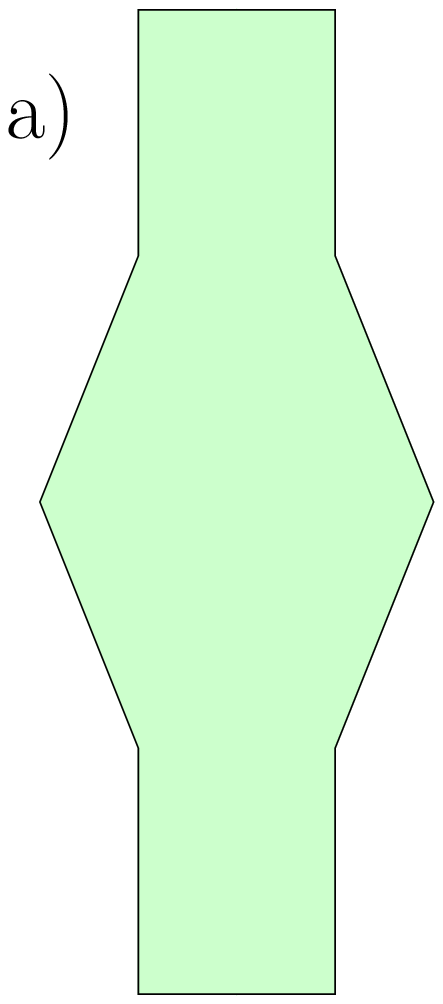}

\end{minipage}
\begin{minipage}[ht]{0.2\linewidth}
\includegraphics[width=0.85\linewidth]{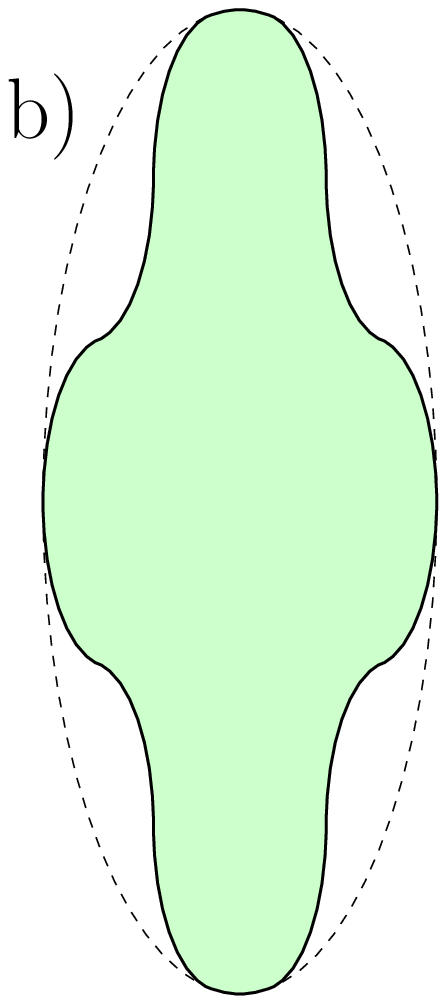}

\end{minipage}
\begin{minipage}[ht]{0.2\linewidth}
\includegraphics[width=0.85\linewidth]{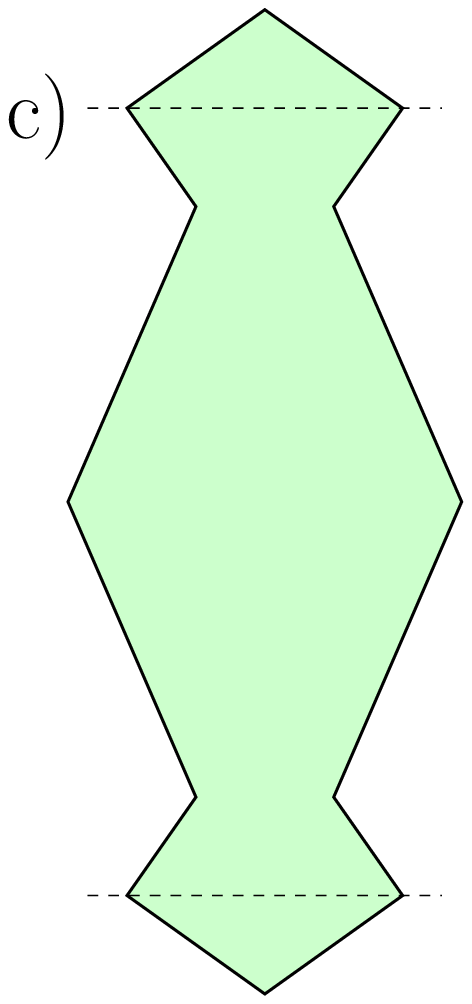}
\end{minipage}
\begin{minipage}[ht]{0.2\linewidth}
\includegraphics[width=0.85\linewidth]{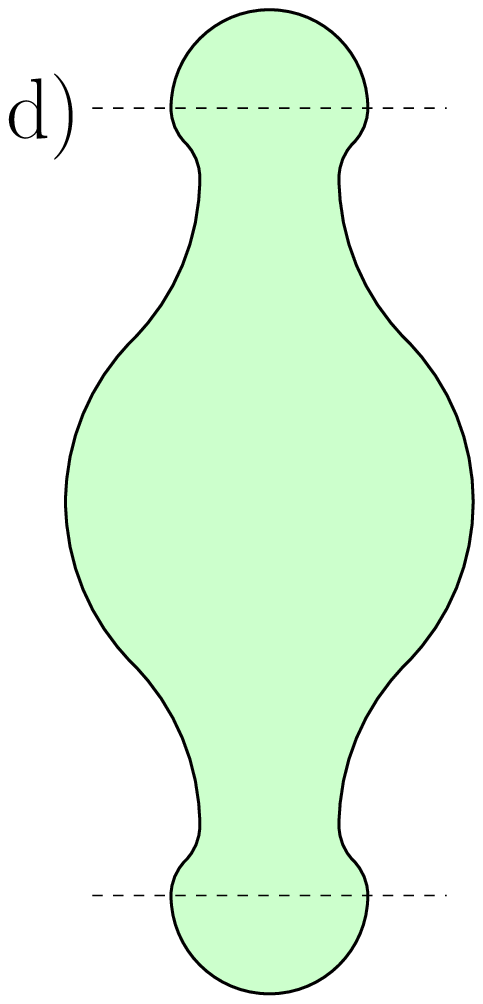}
\end{minipage}
\end{center}
\caption{....}
\end{figure}

In the case $h_\bigstar(z)=h(z)$ for $|z|<\delta$, $0<h_\bigstar(z)<1$ for $|z|\in[\delta,1]$, $\delta>0$ see Fig. 4,a and b,
when the unique global strict maximum of the width function $h_\bigstar$ is attained at the point $z=0$, Theorem \ref{Main}
remains valid even literally.

If the function $h_\bigstar$ has local maxima as depicted in Fig. 4, c and d, at the first sight the asymptotic
procedures in use may help to construct approximate eigenfunctions which are localized near those maximum points.
However, the corresponding eigenvalues get asymptotic forms, different from \eqref{LamH}, and climb above
the lower bound \eqref{cutoff} of the continuous spectrum where the max-min principle does not apply
and detection of eigenvalues becomes much more complicated task, cf. \cite{na489,na546}.
At the same time, recalling elegant trick \cite{EvliVa}, we impose the artificial Dirichlet conditions on the horizontal
cross section $\{x\in Q: z=0\}$ and consider the Dirichlet problem \eqref{DP} in the upper
half $Q^+=\{x\in Q: z>0\}$ of the cruciform waveguide Q. According to \cite{na480}, this problem has at least
one eigenvalue $\lambda^+$ below the principle eigenvalue $\lambda_\dagger(Q^+)$ of the continuous spectrum in $Q^+$.
Moreover, the odd extension in the vertical coordinate $z$ of the corresponding eigenfunction $u^+$ gives an eigenfunction
of the original problem in the intact waveguide $Q$.

For the cruciform waveguide composed from the circular cylinders, cf. \cite{BaMaNaAa}, we readily observe that $\lambda^+$ stays above
the threshold $\lambda_\dagger(Q)<\lambda_\dagger(Q^+)$. Indeed, as was shown in \cite{BaMaNaAa}, the discrete spectrum
consists of the only eigenvalue $\lambda^\circ\in (0, \lambda_\dagger(Q))$ while the corresponding eigenfunction
is even in variable $z$. However, for the waveguides $Q^H$ in cases {\bf (i)} and {\bf (ii)} such inference may become wrong
because the reach family of above-constructed eigenfunctions contains ones which are odd in the vertical coordinate $z$.

\subsection*{Acknowledgement}
Authors were supported by the grant 0.38.237.2014 of St.Petersburg
University and grant 15-01-02175 of Russian Foundation of Basic Research.
First two authors were also supported by ``Native towns'', 
a social investment program of PJSC "Gazprom Neft".

\end{document}